\def\cF{{\mathcal F}}
\newtheorem{theorem}{{Theorem}}[section]
\newtheorem{proposition}[theorem]{{Proposition}}
\newtheorem{lemma}[theorem]{{Lemma}}
\newtheorem{question}[theorem]{{Question}}
\theoremstyle{definition}
\theoremstyle{remark}
\title[No Euler class $0$ taut foliation on $W$]{Note on Euler class $0$ taut foliation on the Whitehead link exterior}
\author{Yao Fan, Zhentao Lai and Bin Yu}
\date{\today}
\begin{document}
\maketitle

\begin{abstract}
This paper studies the existence of co-orientable taut foliations on 3-manifolds, particularly focusing on the Whitehead link exterior. We demonstrate fundamental obstructions to the existence of such foliations with certain Euler class properties, contrasting with the more permissive case of the figure-eight knot exterior. Our analysis reveals that most lattice points in the dual unit Thurston ball of the Whitehead link exterior cannot be realized as Euler classes of co-orientable taut foliations, with only limited exceptional cases. The proofs combine techniques from foliation theory, including saddle fillings and index formulas, with topological obstructions derived from Dehn surgery. These results yield general methods for studying foliations on cusped hyperbolic 3-manifolds. These techniques applied to broader classes of manifolds beyond the specific examples considered here.
\end{abstract}

\section{Introduction}\label{s.Int}
Let $W$ be a compact orientable $3$-manifold such that $\partial W$ is either empty or consists of finitely many tori. Let $\mathcal{F}$ be a co-orientable taut foliation on $W$ that is transverse to $\partial W$, and let $T\mathcal{F}$ denote the tangent bundle of the foliation $\mathcal{F}$. According to Thurston \cite{Thu}, the dual Thurston norm $x^*(e(T\mathcal{F}))$ lies in the dual unit ball $B^*(W)$, meaning that $x^*(e(T\mathcal{F})) \leq 1$.

Thurston's \emph{Euler class-one conjecture} states that when $W$ is closed, for every integral class $a \in H^2(W;\mathbb{Z})$ with $x^*(a) = 1$, there exists a taut foliation $\mathcal{F}$ on $W$ whose Euler class equals $a$. In \cite{Ya}, Yazdi provided the first class of counterexamples to this conjecture. He constructed infinitely many hyperbolic  3-manifolds for which an even lattice point on the boundary of the dual unit ball does not correspond to any co-orientable taut foliation\footnote{Recently, Yi Liu proved that every oriented closed hyperbolic 3-manifold admits some finite cover that serves as a counterexample \cite{Liu}.}. Yazdi subsequently posed two natural questions:

\begin{question}\label{q.1}
Which points in the dual unit ball can be realized as the Euler class of some taut foliation on $W$?
\end{question}

\begin{question}\label{q.2}
Which $3$-manifolds with positive first Betti number admit a taut foliation with trivial Euler class?
\end{question}

Certainly, Question \ref{q.2} represents a special case of Question \ref{q.1}, but it carries particular significance: the existence of a taut foliation $\mathcal{F}$ on $W$ with $e(T\mathcal{F}) = 0$ implies the existence of a non-singular vector field $X$ on $W$ that is tangent to $\mathcal{F}$. Moreover, when $\partial W \neq \emptyset$, we additionally require that $X$ is transverse to $\partial W$.

As a concrete example addressing Question \ref{q.2}, Yazdi proposed investigating the case where $W$ is the Whitehead link exterior. The Whitehead link $L$ consists of two components $L_1$ and $L_2$ (see Figure \ref{f.S1}). Through an isotopy, one can observe that $L_1$ and $L_2$ are interchangeable. 

For the Whitehead link exterior $W$, its boundary $\partial W$ comprises two tori $T_1$ and $T_2$, corresponding to $L_1$ and $L_2$ respectively. We establish a coordinate system on each torus boundary using the standard meridian-longitude coordinates, where the meridian is assigned slope $\infty$ and the longitude is assigned slope $0$. 
Under this convention, any simple closed curve on a torus boundary can be assigned a rational slope $p/q \in \mathbb{Q} \cup \{\infty\}$, representing a curve that winds $p$ times around the meridian and $q$ times around the longitude.

\begin{figure}[htb]
    \centering
    \includegraphics[scale=0.4]{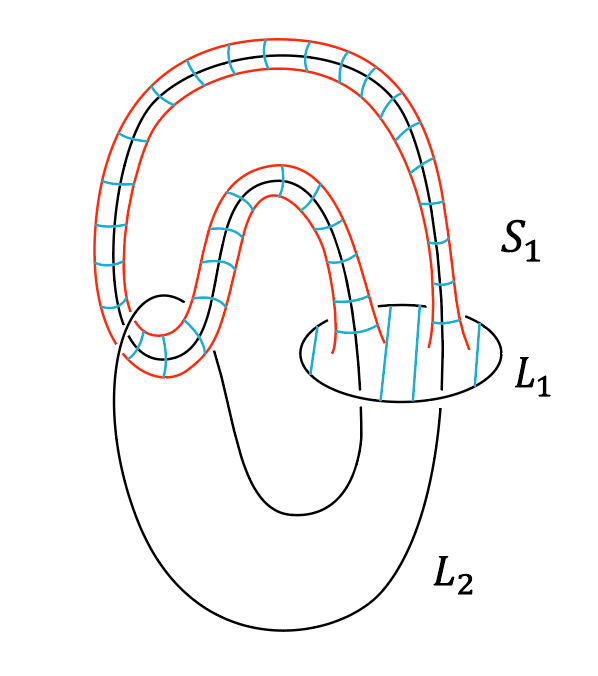}
    \caption{The Whitehead link $L$ and the surface $S_1$}
    \label{f.S1}
\end{figure}

Our results provide a direct answer to Question~\ref{q.2} for this specific case:

\begin{theorem}\label{t.WLc}
The Whitehead link exterior $W$ admits no co-orientable taut foliation $\mathcal{F}$ that is transverse to $\partial W$ and satisfies $e(T\mathcal{F}) = 0$ in $H^2(W, \partial W; \mathbb{Z})$.
\end{theorem}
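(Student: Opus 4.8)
The plan is to argue by contradiction: assume such an $\mathcal{F}$ exists and extract from it numerical boundary data that cannot be realized. Since $\mathcal{F}$ is co-orientable and transverse to $\partial W = T_1 \cup T_2$, it cuts each boundary torus in a non-singular $1$-dimensional foliation $\mathcal{F}_i$, which I would first normalize (using tautness to rule out Reeb-type behaviour in the leaves meeting the boundary) so that each $\mathcal{F}_i$ has a well-defined slope $\gamma_i \in \mathbb{Q} \cup \{\infty\}$ on $T_i$. The hypothesis $e(T\mathcal{F}) = 0$ in $H^2(W, \partial W; \mathbb{Z})$ is, as noted in the introduction, equivalent to the existence of a nowhere-zero vector field $X$ tangent to $\mathcal{F}$ and transverse to $\partial W$; equivalently, the oriented plane bundle $T\mathcal{F}$ is trivialized over $W$ compatibly with the section that the transverse structure determines along $\partial W$.

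First I would translate $e(T\mathcal{F}) = 0$ into a vanishing statement for a pairing with explicit relative homology classes. I would take the properly embedded surface $S_1$ of Figure~\ref{f.S1} together with its image $S_2$ under the involution exchanging $L_1$ and $L_2$; these represent classes generating the relevant part of $H_2(W, \partial W; \mathbb{Z})$. After putting each $S_i$ in general position with respect to $\mathcal{F}$ and invoking a Roussarie-type normalization (for a taut foliation an incompressible surface can be isotoped so that the induced singular foliation on $S_i$ has only saddle tangencies and no centers), I would compute the relative Euler number $\langle e(T\mathcal{F}), [S_i]\rangle$ by a Poincar\'e--Hopf count: it equals $\chi(S_i)$ corrected by the index of the boundary section along $\partial S_i$, where the correction is governed by how each slope $\gamma_j$ meets the curves $\partial S_i \subset T_j$. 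Since $\langle e(T\mathcal{F}), [S_i]\rangle = 0$, this produces a system of equations relating $\chi(S_i)$ to the boundary slopes $\gamma_1, \gamma_2$ and their geometric intersection numbers with $\partial S_i$.

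Solving this system (the $L_1 \leftrightarrow L_2$ symmetry cutting the casework roughly in half) should pin the admissible boundary slopes down to a short finite list. I would then eliminate each surviving possibility by a surgery/tautness obstruction: either the forced slope on some $T_i$ is incompatible with the absence of Reeb components forced by tautness, or capping the cusps by solid tori along the forced slopes---a Dehn filling of $W$---produces a closed manifold to which $\mathcal{F}$ would extend, via the saddle-filling construction, as a taut foliation, while that particular filling of the Whitehead link exterior is a manifold known to carry no taut foliation (for instance a reducible manifold or a small Seifert-fibered space or lens space). Either alternative yields a contradiction and proves the theorem.

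The main obstacle I expect is the computation in the second paragraph: getting the boundary index correction exactly right, i.e.\ carrying out the saddle-filling bookkeeping and the Poincar\'e--Hopf count with correct signs so that the vanishing of $\langle e(T\mathcal{F}), [S_i]\rangle$ becomes a usable linear constraint, while simultaneously justifying the Roussarie normalization (control of tangencies) for the specific surfaces $S_i$. The concluding surgery obstruction is then a finite check, but it depends on having an accurate catalogue of the relevant Dehn fillings of $W$ and their non-foliability.
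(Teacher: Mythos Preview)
Your first step contains a genuine gap: tautness of $\mathcal{F}$ does \emph{not} let you normalize the boundary foliations $\mathcal{F}_i = \mathcal{F}\cap T_i$ to suspensions with well-defined slopes. Tautness rules out three-dimensional Reeb components inside $\mathcal{F}$, but the induced one-dimensional foliation on a boundary torus can perfectly well contain Reeb annuli, and in this problem it \emph{must}. Indeed, if $\mathcal{F}_i$ were a suspension then Yazdi's extension of Thurston's index formula applies and gives $\langle e(T\mathcal{F}),[S_i]\rangle = \mathrm{Ind}(\mathcal{F},S_i) \equiv \chi(S_i) \pmod 2$; with $e(T\mathcal{F})=0$ and $\chi(S_i)=-1$ this is an immediate parity contradiction, independent of any slope data. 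So your step~2 does not produce a ``system of equations in the slopes $\gamma_i$'' to be solved---it simply eliminates the suspension case outright---and the entire content of the theorem lies in the Reeb-annulus case that your plan has dismissed at the outset.

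The paper handles that remaining case via Gabai's saddle-filling theorem: if $\mathcal{F}|_{T_i}$ has Reeb annuli with core slope $s_i$, then every filling $W(\alpha_1,\alpha_2)$ with $\alpha_i \neq s_i$ inherits a taut foliation. Since $W(\infty,\star)$ and $W(\star,\infty)$ are unknot exteriors (so any further filling is a lens space or $S^3$), one is forced to $s_1=s_2=\infty$; but then $W(5/1,5/2)$, the Weeks manifold, would carry a taut foliation, contradicting Calegari--Dunfield. Your step~3 gestures at this kind of Dehn-filling obstruction, but it must be deployed against the \emph{core slope of the Reeb annuli}, not a putative suspension slope, and the decisive non-foliable filling is the Weeks manifold rather than a lens space alone (the lens-space fillings only serve to pin the core slope to $\infty$).
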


This non-existence result for co-orientable taut foliations with vanishing Euler class on the Whitehead link exterior demonstrates that the origin of the dual unit ball $B_{x^*}(W)$ does not correspond to any such foliation.

According to Thurston \cite{Thu}, the dual unit ball $B_{x^*}(W)$ forms a square containing nine lattice points, as illustrated in Figure~\ref{TBWHL}. The $x$-axis represents the component $L_1$ of the Whitehead link $L$, while the $y$-axis corresponds to $L_2$. Regarding the boundary points of $B_{x^*}(W)$, Gabai established that the four vertices correspond to co-orientable taut foliations \cite{Ga1}. For the intermediate points $\{(\pm 1,0),(0,\pm 1)\}$, we demonstrate the following:

\begin{theorem}\label{t.WLc2}
Let $W$ be the Whitehead link exterior. There exists no co-orientable taut foliation $\mathcal{F}$ transverse to $\partial W$ with Euler class $e(T\mathcal{F}) = (\pm 1, 0)$ or $(0, \pm 1)$ in $H^2(W, \partial W; \mathbb{Z})$, except possibly when:

\begin{itemize}
    \item The restriction $\mathcal{F}|_{T_1}$ (respectively $\mathcal{F}|_{T_2}$) forms a suspension foliation, and
    \item The restriction $\mathcal{F}|_{T_2}$ (respectively $\mathcal{F}|_{T_1}$) is non-suspension (containing Reeb annuli) with all Reeb annuli cores having slope $\infty$.
\end{itemize}
\end{theorem}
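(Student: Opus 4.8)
The plan is to reduce to a single Euler class by symmetry and then obstruct it with an index formula evaluated on the two spanning surfaces, using Dehn filling to dispose of the suspension boundaries. By the isotopy exchanging $L_1$ and $L_2$ together with reversal of co-orientation, which sends $e(T\mathcal{F})$ to $-e(T\mathcal{F})$, it suffices to obstruct $e(T\mathcal{F})=(1,0)$. I would work with the pair-of-pants spanning surface $S_1$ of Figure~\ref{f.S1}: it meets $T_1$ in a longitude (slope $0$) and meets $T_2$ in two oppositely oriented meridians (slope $\infty$) coming from the clasp, so $[S_1]$ is dual to the first coordinate and $x(S_1)=-\chi(S_1)=1$; let $S_2$ be its mirror image, dual to the second coordinate. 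Since $\mathcal{F}$ is co-oriented and transverse to $\partial W$, each restriction $\mathcal{F}|_{T_i}$ is, up to isotopy, either a suspension or a foliation containing Reeb annuli, in which case the annulus cores are closed leaves of a well-defined slope $s_i$. The goal is to show that $\langle e(T\mathcal{F}),[S_1]\rangle=1$ and $\langle e(T\mathcal{F}),[S_2]\rangle=0$ can hold simultaneously only in the stated configuration.

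The engine is an index formula obtained by putting $S_j$ in general position with respect to $\mathcal{F}$ and orienting the characteristic foliation by the co-orientation of $\mathcal{F}$, so that its singularities are the tangencies and
$$\langle e(T\mathcal{F}),[S_j]\rangle \;=\; \sum_{p}\epsilon(p)\;+\;B_1(S_j)+B_2(S_j),$$
where $\epsilon(p)=\pm1$ is the sign of the tangency $p$ and $B_i$ records the rotation of $T\mathcal{F}$ along $\partial S_j\cap T_i$. For a suspension restriction $B_i$ depends only on the suspension slope and on the slope of $\partial S_j\cap T_i$; for a Reeb restriction $B_i$ carries in addition one signed contribution per transverse crossing of $\partial S_j\cap T_i$ with a Reeb core. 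A boundary curve parallel to the cores contributes no crossing term, so since $\partial S_1\cap T_2$ has slope $\infty$ its crossing term with slope-$s_2$ cores vanishes exactly when $s_2=\infty$. For $S_2$, whose $T_2$-boundary is the longitude (slope $0$), each slope-$\infty$ core is crossed once, but the two oppositely oriented cores bounding a single Reeb annulus contribute with opposite signs and cancel. Thus, when $\mathcal{F}|_{T_2}$ is Reeb with all cores of slope $\infty$ and $\mathcal{F}|_{T_1}$ is a suspension, the formula is consistent with $\langle e,[S_1]\rangle=1$ and $\langle e,[S_2]\rangle=0$, which is precisely why this configuration cannot be excluded.

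To eliminate the remaining configurations I would pass to Dehn fillings. Where $\mathcal{F}|_{T_i}$ is a suspension, filling $T_i$ along the suspension slope extends $\mathcal{F}$ to a taut foliation on the filled manifold and carries $(1,0)$ to its image there. Filling both suspension boundaries yields a closed surgery on the Whitehead link, on which the descended class is incompatible with any co-orientable taut foliation by the known constraints for the surgered manifold, ruling out the suspension--suspension case. Filling the single suspension boundary $T_1$ reduces to a twist-knot exterior carrying a taut foliation transverse to $T_2$; there the index formula applied to $S_2$ shows that a Reeb slope $s_2\ne\infty$ forces an uncancelled crossing term and hence $\langle e,[S_2]\rangle\ne0$, a contradiction. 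The Reeb--Reeb case is excluded directly from the formula, since both surfaces then acquire uncancelled boundary terms that cannot reproduce the value $(1,0)$. What survives is exactly one suspension boundary together with a slope-$\infty$ Reeb boundary, the claimed exception.

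The main obstacle is the sign bookkeeping in the Reeb term. Evaluating $B_i$ requires a \emph{saddle filling}: one caps the trace of $\mathcal{F}$ near $\partial S_j\cap T_i$ by a standard local model whose interior singularities are saddles, and one must verify that this introduces no spurious centers capable of cancelling the crossing contribution. Establishing that the co-orientation of $\mathcal{F}$ across a Reeb annulus rigidly fixes these signs, so that the crossing term genuinely survives when $s_2\ne\infty$ and genuinely vanishes when $s_2=\infty$, is the delicate point on which the entire dichotomy turns, and it is here that the slope $\infty$ is singled out by its parallelism with $\partial S_1\cap T_2$.
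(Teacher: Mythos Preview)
Your overall plan---reduce by symmetry to $(1,0)$, read off constraints from the two spanning surfaces, and then Dehn-fill---matches the paper's, but the execution diverges at every step and leaves real gaps. First, the surface: in the paper $S_1$ is the once-punctured torus Seifert surface for $L_1$ \emph{disjoint from} $L_2$, not a pair of pants meeting $T_2$. This matters because the index formula $\mathrm{Ind}(\mathcal{F},S_j)=\langle e(T\mathcal{F}),[S_j]\rangle$ together with $\mathrm{Ind}\equiv\chi(S_j)\pmod 2$ is only applied when $\partial S_j$ sits on a single torus whose restricted foliation is a suspension; with $\chi(S_2)=-1$ and $\langle e,[S_2]\rangle=0$ this parity immediately forces $f_2=\mathcal{F}|_{T_2}$ to contain Reeb annuli. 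You never invoke this parity, and your replacement---an index formula with ad hoc ``crossing terms'' $B_i$ along Reeb boundaries---is not established anywhere and is exactly the kind of sign bookkeeping you yourself flag as the delicate point.

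As a consequence your case analysis does not close. For suspension--suspension you propose to fill both tori along the (unknown) suspension slopes and then assert that ``the descended class is incompatible by the known constraints''; but the filled manifold depends on those slopes and for most choices certainly does carry taut foliations, so this is not an argument. The paper instead dispatches this case instantly with the parity contradiction above. For Reeb--Reeb you say it is ``excluded directly from the formula''; the paper's actual mechanism is completely different: by Theorem~\ref{t.monkey} the Reeb cores on each $T_i$ must have slope $\infty$ (since $W(\infty,r)$ and $W(r,\infty)$ are lens spaces or $S^3$), and then $W(5/1,5/2)$ is the Weeks manifold, which admits no taut foliation---a contradiction. Finally, in the mixed case you fill the \emph{suspension} side $T_1$ at an unknown slope to land in an unspecified ``twist-knot exterior''; the paper does the opposite: it saddle-fills the \emph{Reeb} side $T_2$ at slope $\infty$ (legal whenever $s_2\neq\infty$), obtaining the solid torus $W(\star,\infty)$ with a taut foliation transverse to its boundary, and then uses the lemma that a solid torus carries no nontrivial such foliation. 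That is what singles out $s_2=\infty$ as the only surviving possibility---not any parallelism of $\partial S_1\cap T_2$ with the Reeb cores.
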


We present the proof of Theorem~\ref{t.WLc2} in Section~\ref{s.mp}.

This brings us close to a complete understanding of Question~\ref{q.1} for the Whitehead link exterior, as summarized below:

\begin{itemize}
    \item Gabai's work \cite{Ga1} establishes that the four vertices of the dual unit ball $B_{x^*}(W)$ correspond to co-orientable taut foliations, represented by solid round dots in Figure~\ref{TBWHL}.
    
    \item Theorem~\ref{t.WLc} shows that the origin has no corresponding co-orientable taut foliations, indicated by a hollow round dot in Figure~\ref{TBWHL}.
    
    \item Theorem~\ref{t.WLc2} demonstrates that the lattice points $\{(\pm 1,0),(0,\pm 1)\}$ generally lack corresponding co-orientable taut foliations, except for the potential special case described in the theorem. These points are marked by hollow square dots in Figure~\ref{TBWHL}.
\end{itemize}

\begin{figure}[htb]
    \centering
    \includegraphics[scale=0.5]{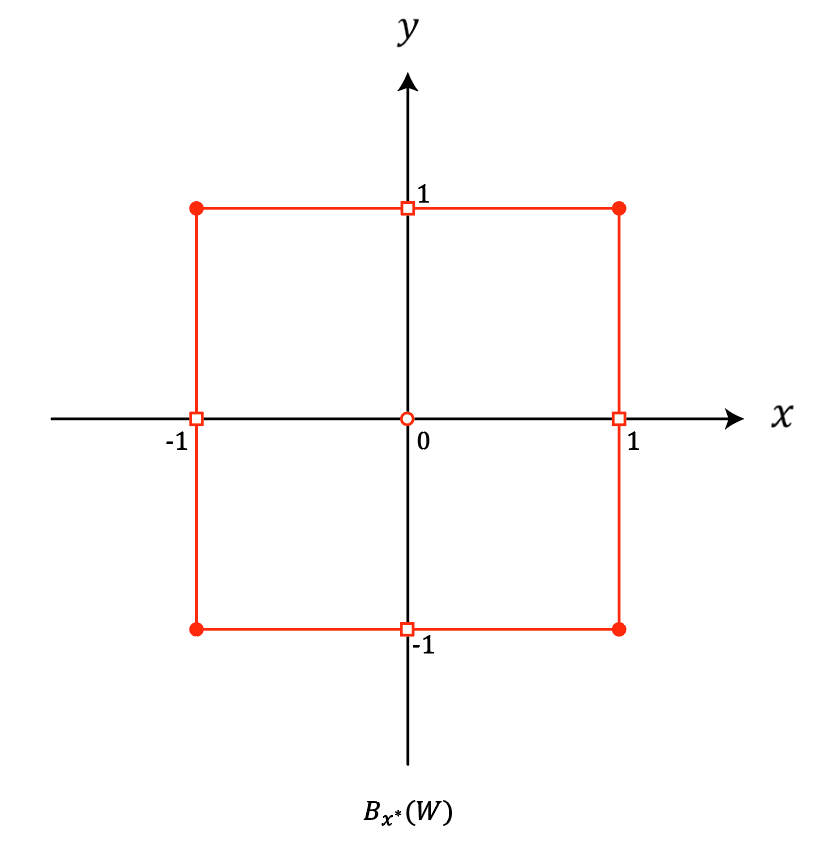}
    \caption{The four vertices of the square can be realized by taut foliations for the Whitehead link exterior $W$.}
    \label{TBWHL}
\end{figure}

This situation contrasts sharply with the case of the figure-eight knot exterior $N$, where every lattice point on the dual unit ball $B_{x^*}(N)$ corresponds to a taut foliation:

\begin{itemize}
    \item Following Thurston \cite{Thu}, one can readily verify that the dual unit ball $B_{x^*}(N)$ forms a line segment containing exactly three lattice points, as depicted in Figure~\ref{TBF8}.
    
    \item Gabai's results \cite{Ga1} establish that the two vertices of $B_{x^*}(N)$ correspond to co-orientable taut foliations. These foliations arise naturally from the fiber bundle structure of $N$, with each leaf being a once-punctured torus.
    
    \item For the origin, we construct the corresponding foliation as follows: Consider the natural suspension Anosov flow $X_t$ in the Sol manifold
    \[
    \bar{N} = \text{Map}\left(\mathbb{T}^2, \begin{pmatrix} 2 & 1 \\ 1 & 1 \end{pmatrix}\right).
    \]
    Let $\mathcal{F}^s$ denote the weak stable foliation of $X_t$, and let $\gamma_O$ be the periodic orbit corresponding to the origin $O \in \mathbb{T}^2$. By removing a small standard neighborhood of $\gamma_O$, we obtain an induced foliation $\mathcal{F}$ on $N$ derived from $\mathcal{F}^s$ on $\bar{N}$. This construction yields:
    \begin{itemize}
        \item $\mathcal{F}$ is transverse to $\partial N$
        \item $\mathcal{F} \cap \partial N$ consists of two Reeb annuli \cite{FW}
        \item There exists a vector field $X$ tangent to $\mathcal{F}$ and transverse to $\partial N$ \cite{FW}
    \end{itemize}
    Consequently, we have $e(T\mathcal{F}) = 0$.
\end{itemize}

\begin{figure}[htb]
    \centering
    \includegraphics[scale=0.5]{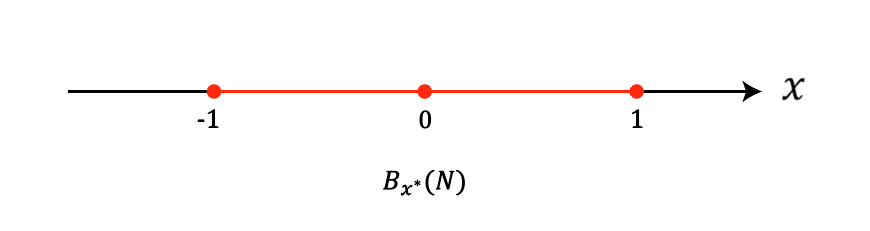}
    \caption{All three integer points can be realized by taut foliations for the figure-eight knot exterior $N$.}
    \label{TBF8}
\end{figure}

We outline the proof strategy for Theorem~\ref{t.WLc} at the beginning of Section~\ref{s.more}. The key observation is that when sufficient topological information is available for a hyperbolic 3-manifold with finitely many cusps, our method can be adapted to establish the non-existence of both foliations and hyperbolic plugs. 

Building upon the results of Dunfield, Hoffman, and Licata \cite{DHL}, we establish the existence of an infinite family $\{Y_k\}_{k\in\mathbb{Z}}$ (for $|k|\gg 0$) of one-cusped fibered hyperbolic 3-manifolds that admit no co-orientable taut foliations with vanishing Euler class that are transverse to the boundary. The complete technical details and further applications of this method are developed in Section~\ref{s.more}.

\section*{Acknowledgments}
The third author is supported by Shanghai Pilot Program for Basic Research, National Program for Support of Top-notch Young Professionals and the Fundamental Research Funds for the Central Universities.

\section{Saddle fillings}

We begin by describing a technique for constructing foliations on closed 3-manifolds. Let $M$ be a compact 3-manifold with torus boundary admitting a co-orientable taut foliation $\mathcal{F}$ that is transverse to $\partial M$. Suppose the induced foliation $f = \mathcal{F} \cap \partial M$ contains Reeb annuli. Then, following Gabai's saddle filling operation \cite[Operation 2.4.4]{Ga2}, we can extend $\mathcal{F}$ to a foliation $\overline{\mathcal{F}}$ on the Dehn filling of $M$, with possible exceptions in certain special cases. More precisely: 

\begin{theorem}[Gabai]\label{t.monkey}
Let $M$ be a compact orientable 3-manifold, $\cF$ be a co-orientable taut foliation on $M$ and $P$ be a torus boundary component such that $\cF$ transversely intersects $P$ and $\cF|_P$ has a Reeb component. If $\alpha$ is a simple closed curve on $P$ which is not isotopic to the core of a Reeb component of $\cF|_P$, then $M(\alpha)$ carries a taut foliation, where  $M(\alpha)$ is the manifold obtained by $\alpha$-filling $P$.
\end{theorem}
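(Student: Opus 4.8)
The plan is to reproduce Gabai's saddle filling construction locally near the boundary torus $P$ and then verify that the resulting object is a genuine taut foliation on $M(\alpha)$. First I would analyze the structure of $\cF|_P$. Since $\cF$ is transverse to $P$ and the induced one-dimensional foliation on the torus $P$ has a Reeb component, the foliation $\cF|_P$ consists of one or more Reeb annuli separated by a finite collection of closed leaves; the cores of the Reeb components are parallel simple closed curves of some fixed slope $\beta$ on $P$. The hypothesis that $\alpha$ is not isotopic to a Reeb core means precisely that $\alpha \neq \beta$ (up to isotopy), so $\alpha$ meets each Reeb core transversely in $\Delta = |\alpha \cdot \beta| \geq 1$ points. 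This intersection number is the key combinatorial quantity controlling the filling.

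Next I would set up the collar and the filling solid torus $V = D^2 \times S^1$ glued so that $\alpha$ bounds the meridian disk $D^2 \times \{*\}$. The idea is to foliate $V$ by extending $\cF$ across the core in a way compatible with $f = \cF \cap P$. Inside each Reeb annulus on $P$, the leaves of $f$ spiral toward the two boundary closed leaves; under the filling these spiraling leaves are to be capped off. Gabai's saddle filling inserts, near the core circle of $V$, finitely many \emph{saddle tangencies} (the "monkey saddle" picture, hence the label on the theorem) whose number and index are governed by $\Delta$. Concretely, I would build a model foliation on $V$ that agrees with $f$ on $\partial V = P$, is tangent to $\partial V$ along the images of the closed leaves of $f$ and transverse elsewhere, and whose only singular behavior consists of prescribed saddle-type tangencies; then glue this model to $\cF$ along the collar. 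The condition $\alpha \neq \beta$ is exactly what guarantees that the annular pieces can be filled without creating a sphere leaf or a Reeb-foliated solid torus that would destroy tautness.

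The main technical step, and the step I expect to be the principal obstacle, is verifying tautness of the extended foliation $\overline{\cF}$ on $M(\alpha)$ rather than merely producing a foliation. Tautness requires a closed transversal (or a transverse closed $2$-form / volume form) meeting every leaf; equivalently, after Goodman--Gabai one must rule out any compressing disk transverse to $\overline{\cF}$ bounding a leaf and rule out Reeb-components in the filled solid torus. Here I would argue that each transversal to $\cF$ persists after the filling: a closed transversal for the original taut $\cF$ can be isotoped off the surgery region, and the newly introduced leaves in $V$ can be arranged to be pierced by a short transversal arc that closes up using the product structure in the $S^1$ direction of $V$. The delicate point is that the saddle tangencies near the core must not trap any leaf in a way that prevents the existence of such transversals; controlling this is precisely where the hypothesis on $\alpha$ enters, since a Reeb-core slope filling ($\alpha = \beta$) would cap a Reeb annulus into a Reeb-foliated solid torus and obstruct tautness.

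Finally, having constructed a taut $\overline{\cF}$ on $M(\alpha)$, I would note that this is exactly the content of Gabai's Operation $2.4.4$ in \cite{Ga2}, so the cleanest route is to cite that operation for the local model and devote the argument to checking that its hypotheses are met in our setting and that tautness is preserved. Since the statement is attributed to Gabai, I anticipate that the proof in the paper will be short: it will invoke \cite[Operation 2.4.4]{Ga2} for the filling construction and then record the verification that $\alpha$ not being a Reeb core is the only obstruction, so the remaining cases all yield taut foliations on $M(\alpha)$.
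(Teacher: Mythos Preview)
Your high-level picture is in the right direction but misses the two genuine technical obstacles and misidentifies the main difficulty. First, your description of $\cF|_P$ is incomplete: between consecutive Reeb annuli $R_i$ there are \emph{suspension annuli} $S_i$ carrying interval holonomies $h_i$, not merely isolated closed leaves. To extend leaves across the filling solid torus one must connect $S_{2i}$ to $S_{2i+1}$, and this forces $h_{2i}$ and $h_{2i+1}$ to be conjugate; arranging this (the paper's Step~2, following Gabai and using that $S_{2i}$, $S_{2i+1}$ cobound a single Reeb annulus $R_{2i+1}$) is essential and is entirely absent from your sketch. Second, before that holonomy matching even makes sense, the even-indexed Reeb annuli must all be made \emph{parallel}, i.e.\ spiraling in the same direction; the paper's Step~1 does this via half-Reeb attachments, leaf splitting, and creating-homology surgery, another nontrivial preprocessing you do not address. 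By contrast, the tautness of $\overline{\cF}$, which you single out as the principal obstacle, is nearly automatic: every leaf of the extended foliation is an enlargement of an old leaf of $\cF$, so the original closed transversals already meet every new leaf.

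Your closing expectation is also off: the paper does \emph{not} simply invoke \cite[Operation~2.4.4]{Ga2}. Gabai wrote out only the case of four Reeb components with $\alpha$ meeting each annulus in a single arc; the paper supplies the general $(n,k)$ case in four explicit steps (parallelize the even $R_{2i}$, adjust holonomies to be conjugate, cap the odd $R_{2i-1}$ with half Reeb foliations and connect $S_{2i}$ to $S_{2i+1}$, then attach $\tfrac{nk}{2}$-saddles to the remaining core region).
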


Gabai established the detailed proof for the specific case where the number of Reeb components is four and $\alpha$ intersects each annulus in exactly one arc. While the general case follows similarly for experts, our subsequent arguments will rely crucially on this result. To maintain the self-contained nature of our paper, we provide here a complete proof for the remaining cases, adapting Gabai's original approach. 

\begin{proof}[Proof of Theorem \ref{t.monkey}]
Suppose $M(\alpha)=M \cup V$ where $V$ is a product $D^2 \times S^1$ and $\alpha$ is the meridian of $V$, i.e. $\alpha = \partial D^2$. Since $\cF$ is co-orientable and  $\cF|_P$ has a Reeb component, then $\cF|_P$ has an even number of Reeb components. Assume the number of the Reeb components is $n$  and $\alpha$ intersects each annulus in $k$ essential arcs. Let $R_1, S_1,  \cdots R_n, S_n$ be the annuli cyclically ordered such that $\cF|_{R_i}$ is a Reeb component and $\cF|_{S_i}$ is a suspension of a homeomorphism $h_i$ of the interval (see Figure \ref{nk} as the illustration of the case $(n,k)=(4,3)$).

Let $c_i, c_i'$ be the boundary circles of $R_i$ ordered via the cyclic order. We can assign orientations $O_{\alpha}$, $O_{i}$, $O_{i}'$ to $\alpha$, $ c_{i}$, $c_{i}'$  respectively, such that the pairs $\{ O_{\alpha}, O_{i} \}$, $\{ O_{\alpha},O_{i}' \}$ are compatible with the outward normal orientation of $\partial V$.

\begin{figure}[htb]
    \centering
    \includegraphics[scale=0.35]{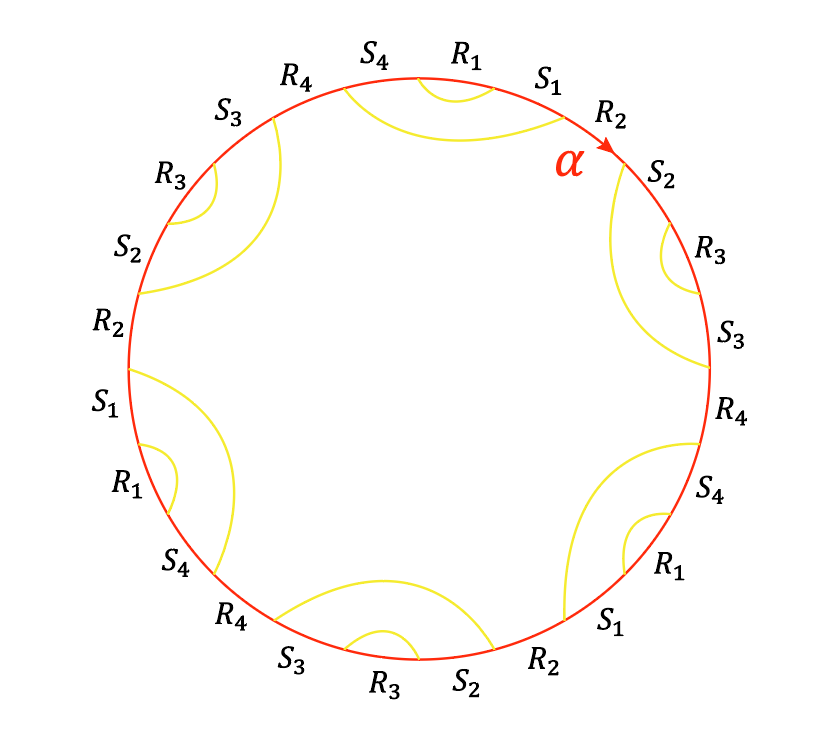}
    \caption{The case when $(n,k)=(4,3)$}
    \label{nk}
\end{figure}

The proof will consist of the following 4 steps.

\textbf{Step 1: Making $R_2, R_4,  \cdots R_n$ parallel.}

By the word ``parallel'', we mean the foliations in two Reeb annuli spiral in the same direction around the torus. Without loss of generality, suppose the foliations in each $R_{2i}$ spiral in the same direction as $O_{2i}$, $O_{2i}'$, otherwise turn $R_{2i}$ around as follows.

\begin{enumerate}
\item Connect $c_{2i}$ and $c_{2i}'$ with an annulus $A_{2i} \subset V$ where $A_{2i}$ and  $R_{2i}$ bound a solid torus $V_{2i} \subset V$. Then extend $\cF$ to $V_{2i}$ by attaching a half solid torus Reeb foliation as Figure \ref{halfR} shows. Note that the new foliation $\cF'$ is still taut, since each leaf of $\cF'$ is an extension of an old leaf from $\cF$.

\begin{figure}[htb]
    \centering
    \includegraphics[scale=0.4]{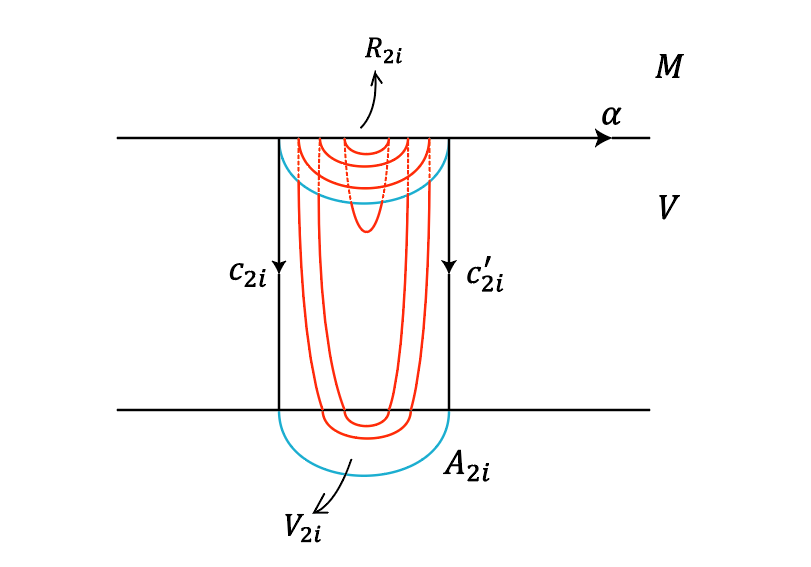}
    \caption{A half Reeb foliation}
    \label{halfR}
\end{figure}

\item Suppose $l_{2i}$ is the leaf of  $\cF'$ containing  $A_{2i}$. Split $l_{2i}$ to an $I$-bundle $U(l_{2i})$, where $\partial U(l_{2i})= l_{2i}^{+}  \cup  l_{2i}^{-}$  and  $c_{2i}$ (resp.,$\ c_{2i}'$) is split into $c_{2i}^{+}$, $c_{2i}^{-}$ (resp.,$\ c_{2i'}^{+}$, $c_{2i'}^{-}$). Take a product foliation in $U(l_{2i})$. For simplicity, we still denote the new foliation by $\cF'$.
\item Take a complete $1$-manifold $J$ in $l_{2i}^{+}$ that intersects $c_{2i}^{+}$, $c_{2i'}^{+}$ exactly once. By doing a ``Creating homology surgery'' (see \cite[Operation 2.2]{Ga2}), shown in Figure \ref{com} and Figure \ref{creating}, we get a new foliation $\cF^{''}$ from $\cF'$ such that the holonomy of $\cF^{''}$ along $c_{2i}^{+}$ is contracting.

\begin{figure}[htb]
    \centering
    \includegraphics[scale=0.4]{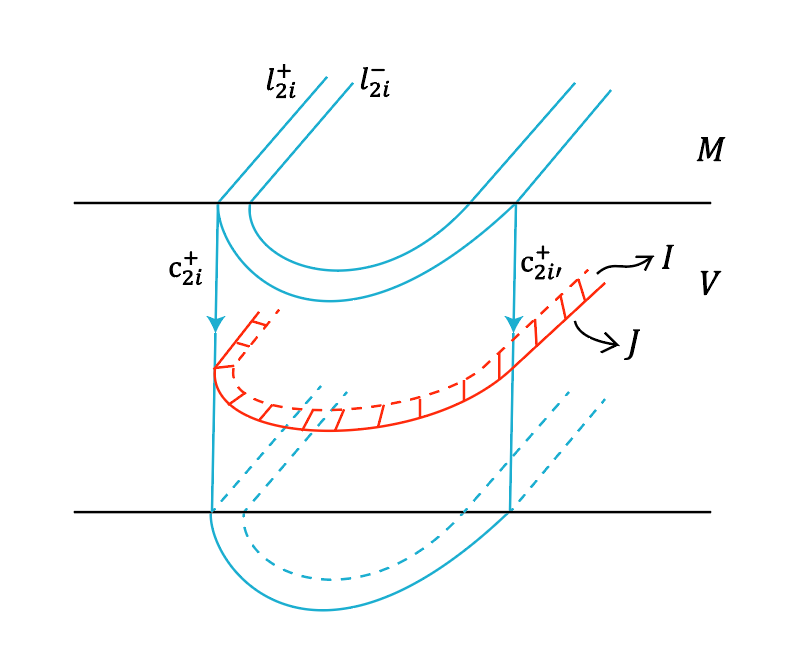}
    \caption{The complete $1$-manifold $J$}
    \label{com}
\end{figure}

\begin{figure}[htb]
    \centering
    \includegraphics[scale=0.4]{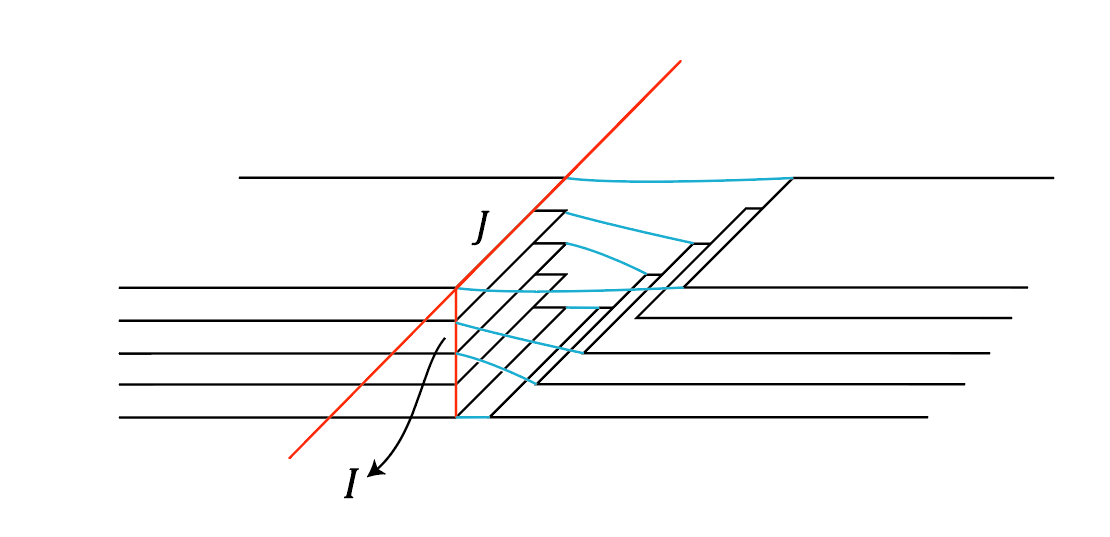}
    \caption{Creating homology surgery}
    \label{creating}
\end{figure}

\item Dig out an annulus $A_{2i}'$ in $U_{2i} \cap V$ such that $\partial A_{2i}' = c_{2i}^{+} \cup c_{2i'}^{+}$ and $\cF^{''}|_{A_{2i}'}$ is a Reeb annulus making the holonomy along $c_{2i}^{+}$ contracting (see Figure \ref{CR}).

\begin{figure}[htb]
    \centering
    \includegraphics[scale=0.4]{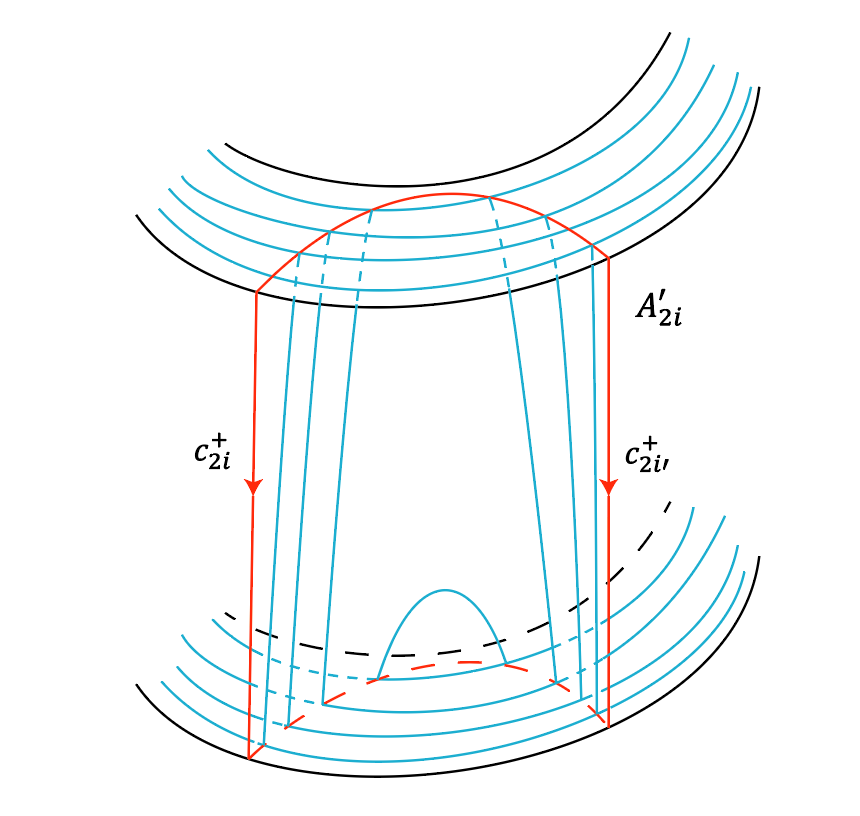}
    \caption{Creating a Reeb annulus}
    \label{CR}
\end{figure}

\end{enumerate}
For simplicity, we still denote by $R_{2i}$ the new Reeb annulus.

\textbf{Step 2: Adjusting the holonomy $h_{2i}$ of $S_{2i}$ and  $h_{2i+1}$ of $S_{2i+1}$ $ (i=1,2,\cdots, \frac{n}{2})$  such that they are conjugate  (Consider $h_{n+1}$ as $h_1$).}

Here $h_{2i}$ is a homeomorphism on an interval along the orientation of $\alpha$, while  $h_{2i+1}$ is along the reverse orientation of $\alpha$.

Gabai elaborated on this process in detail (see \cite[Operation 2.4.4]{Ga2}). The essential reason for this step is that $S_{2i}$ and $S_{2i+1}$ exactly bound one Reeb annulus $R_{2i+1}$. 

\textbf{Step 3: Capping off $R_1,R_3,\cdots, R_{n-1}$ with half solid torus Reeb foliations.  Extending $S_{2i}$ across the solid torus to $S_{2i+1} (i=1,2,\cdots, \frac{n}{2})$ by connecting them(Consider $S_{n+1}$ as $S_1$).}

These two steps are illustrated in Figure \ref{nk}.

\textbf{Step 4: Attaching cut off $\frac{nk}{2}$-saddles to the rest of the solid torus, where $\frac{nk}{2}$ is the number of times that $\alpha$ crosses the Reeb components(see Figure \ref{saddle} as an illustration of a $6$-saddle).}

\begin{figure}[htb]
    \centering
    \includegraphics[scale=0.4]{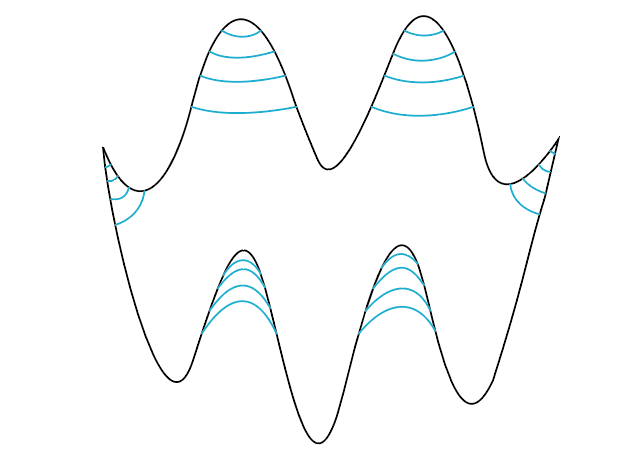}
    \caption{A $6$-saddle for $(n,k)=(4,3)$}
    \label{saddle}
\end{figure}

\end{proof}

\section{Proof of Theorem \ref{t.WLc}}\label{s.co-foli}

Consider the component $L_1$ of the Whitehead link. There exists a compact orientable surface $S_1$ with boundary $\partial S_1 = L_1$ that is disjoint from $L_2$, as illustrated in Figure~\ref{f.S1}.

Assume the Whitehead link exterior $W$ admits a co-orientable taut foliation $\mathcal{F}$ that is transverse to the boundary $\partial W = T_1 \cup T_2$ and has trivial Euler class $e(T\mathcal{F}) = 0$ in $H^2(W, \partial W; \mathbb{Z})$. 

According to Calegari \cite{Cal1}, any topological foliation on a 3-manifold is topologically isotopic to a $C^{\infty,0}$ foliation. This regularity result ensures the existence of a well-defined tangent bundle $T\mathcal{F}$ for the foliation. Moreover, the co-orientability condition guarantees that $T\mathcal{F}$ forms an orientable plane bundle over $W$.

Since $W$ is a compact 3-manifold with boundary, we consider the relative Euler class in the obstruction theory. Let $s$ be a section of the plane field $T\mathcal{F}$ restricted to the boundary $\partial W$. The relative Euler class $e(T\mathcal{F},s) \in H^2(W, \partial W; \mathbb{Z})$ vanishes precisely when $s$ extends to a global section of the plane bundle $T\mathcal{F}$ over $W$.

For our co-orientable taut foliation $\mathcal{F}$, we initially define $e(T\mathcal{F},s)$ using the outward-pointing section $s$. However, this definition is equivalent to using either the inward-pointing section or a section tangent to $\mathcal{F}|_{\partial W}$, since all such sections are related by homotopies through rotations in the plane field $T\mathcal{F}$. After fixing a particular section $s$, we will simply write $e(T\mathcal{F})$ for the relative Euler class.  

We assume that the foliation $f_i = \mathcal{F} \cap T_i$ contains no Reeb annuli. Under this assumption, the boundary of the surface $S_1$ must either be a leaf of $f_1$ or transverse to $f_1$. Following Yazdi's analysis \cite[Section 3.3]{Ya}, we obtain a generalization of Thurston's index formula:
\[
\text{Ind}(\mathcal{F},S_1) = \langle e(T\mathcal{F}), [S_1] \rangle.
\]
Moreover, the index satisfies the congruence relation
\[
\text{Ind}(\mathcal{F},S_1) \equiv \chi(S_1) \mod 2,
\]
where $\chi(S_1) = -1$ denotes the Euler characteristic of $S_1$. 

If there exists a co-orientable taut foliation $\mathcal{F}$ with $e(T\mathcal{F}) = 0$, then $\text{Ind}(\mathcal{F}, S_1) = 0$. However, this contradicts the congruence relation since $\chi(S_1) = -1 \not\equiv 0 \mod 2$. Therefore, the foliation $f_1$ must contain Reeb components on the torus boundary $T_1$. By the symmetry between $L_1$ and $L_2$, Reeb components must also appear on the other boundary component $T_2$.

This argument establishes the following key result:

\begin{proposition} \label{p.Reeb}
Let $W$ be the Whitehead link exterior. If there exists a co-orientable taut foliation $\mathcal{F}$ on $W$ that is transverse to $\partial W$ and satisfies $e(T\mathcal{F}) = 0$ in $H^2(W,\partial W; \mathbb{Z})$, then the induced $1$-dimensional foliation $f_i = \mathcal{F} \cap T_i$ contains a Reeb annulus for each boundary component $T_i$ ($i=1,2$).
\end{proposition}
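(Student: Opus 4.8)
The plan is to argue by contradiction, reproducing in the relative setting the kind of index obstruction already used for the figure-eight discussion. Suppose that the induced foliation $f_1 = \mathcal{F} \cap T_1$ contains no Reeb annulus. The first step is to fix the concrete spanning surface: take the compact orientable surface $S_1$ with $\partial S_1 = L_1$ that is disjoint from $L_2$ shown in Figure~\ref{f.S1}, so that $[S_1]$ determines a class in $H_2(W, \partial W; \mathbb{Z})$ and $\chi(S_1) = -1$. The whole force of the no-Reeb hypothesis is to guarantee a compatible boundary behavior: since $f_1$ has no Reeb component, the curve $\partial S_1 \subset T_1$ is either a leaf of $f_1$ or everywhere transverse to $f_1$, which is precisely the condition that makes the index of $\mathcal{F}$ along $S_1$ well defined.

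The second step is to invoke the generalized Thurston index formula of Yazdi \cite[Section 3.3]{Ya}, which identifies the (signed) index of the tangencies of $\mathcal{F}$ with $S_1$ and the evaluation of the relative Euler class:
\[
\text{Ind}(\mathcal{F}, S_1) = \langle e(T\mathcal{F}), [S_1] \rangle.
\]
Since $e(T\mathcal{F}) = 0$ by hypothesis, the right-hand side vanishes, whence $\text{Ind}(\mathcal{F}, S_1) = 0$.

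The third step is to combine this with the parity constraint $\text{Ind}(\mathcal{F}, S_1) \equiv \chi(S_1) \pmod{2}$, which comes from a Poincar\'e--Hopf count of the singularities of the singular line field $\mathcal{F} \cap S_1$ on $S_1$: each tangency contributes $\pm 1$ according to the co-orientation, and flipping a sign changes the total by an even amount, so the signed count agrees modulo $2$ with the unsigned count, which is $\chi(S_1)$. As $\chi(S_1) = -1$ is odd while $\text{Ind}(\mathcal{F}, S_1) = 0$ is even, we reach a contradiction, forcing $f_1$ to contain a Reeb annulus. Finally, because the Whitehead link admits an isotopy interchanging its components $L_1$ and $L_2$, applying the identical argument to the analogous surface spanning $L_2$ shows that $f_2$ likewise contains a Reeb annulus.

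I expect the main obstacle to lie in steps two and three, namely in justifying that Yazdi's index formula and the parity congruence, originally set up for his closed-manifold counterexamples, transfer verbatim to a properly embedded surface whose boundary sits on a cusp torus along which $\mathcal{F}$ is transverse. One must verify that the boundary contribution to the index is controlled precisely by the fact that $\partial S_1$ is a leaf or a transverse curve, and that the parity congruence can be established tracking only the local tangency indices, independently of the actual value of the Euler class. Once these two facts are in place, the contradiction and the symmetry argument are immediate.
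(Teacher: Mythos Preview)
Your proposal is correct and follows essentially the same argument as the paper: assume no Reeb annulus on $T_1$, use the surface $S_1$ with $\chi(S_1)=-1$, apply Yazdi's generalized index formula to get $\text{Ind}(\mathcal{F},S_1)=\langle e(T\mathcal{F}),[S_1]\rangle=0$, contradict the parity congruence $\text{Ind}(\mathcal{F},S_1)\equiv\chi(S_1)\pmod 2$, and finish by the symmetry interchanging $L_1$ and $L_2$. The concern you flag about the relative setting is handled in the paper exactly as you suggest, by citing \cite[Section 3.3]{Ya} for the version of the index formula that applies when $\partial S_1$ is a leaf or a transversal of the boundary foliation.
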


Under these conditions, we can apply the saddle filling technique to complete the proof of Theorem~\ref{t.WLc}.

\begin{proof}[Proof of Theorem~\ref{t.WLc}]
Suppose, for contradiction, that there exists a co-orientable taut foliation $\mathcal{F}$ on $W$ transverse to $\partial W$ with $e(T\mathcal{F}) = 0$ in $H^2(W,\partial W;\mathbb{Z})$. Proposition~\ref{p.Reeb} guarantees that each induced 1-dimensional foliation $f_i = \mathcal{F}\cap T_i$ (for $i=1,2$) contains a Reeb annulus.

Applying the saddle filling technique, we perform Dehn fillings on $W$, using $\star$ to indicate no filling on a boundary component. The manifolds $W(\infty,\star)$ and $W(\star,\infty)$ are both trivial knot exteriors. For any non-zero rational number $r$, the filled manifolds $W(\infty,r)$ and $W(r,\infty)$ become lens spaces, while setting $r=0$ yields $S^3$ in both cases. However, neither lens spaces nor $S^3$ admit any taut foliations. 

By Theorem~\ref{t.monkey}, the only possible exceptional slope \footnote{Here an exceptional slope means the slope is the exceptional case in Theorem \ref{t.monkey}. Hence, it is parallel to the core of Reeb components.} on each boundary torus must be $\infty$. This conclusion, combined with the result of Calegari and Dunfield \cite{CD} that the Weeks manifold $W(5/1,5/2)$ admits no taut foliations, leads to a contradiction. Therefore, the Whitehead link exterior $W$ cannot support any co-orientable taut foliation $\mathcal{F}$ that is transverse to $\partial W$ and has vanishing relative Euler class.
\end{proof}

\section{Proof of Theorem \ref{t.WLc2}} \label{s.mp}

\begin{proof}[Proof of Theorem \ref{t.WLc2}]
By the symmetry of both the Whitehead link and its dual unit ball, the four lattice points $\{(\pm 1,0), (0,\pm 1)\}$ are equivalent. Without loss of generality, we consider the representative case $(1,0)$. 

Assume there exists a co-orientable taut foliation $\mathcal{F}$ on $W$ with Euler class $e(T\mathcal{F}) = (1,0)$ in $H^2(W,\partial W;\mathbb{Z})$ that is transverse to $\partial W$. From the definition of the relative Euler class, we derive the pairing equations:
\begin{align}
\langle e(T\mathcal{F},s), [S_1] \rangle &= 1, \label{e.1} \\
\langle e(T\mathcal{F},s), [S_2] \rangle &= 0, \label{e.2}
\end{align}
where $[S_1]$ and $[S_2]$ are generators of $H_2(W,\partial W)$ corresponding to the boundary tori $T_1$ and $T_2$ respectively.

Following the same argument as in Theorem~\ref{t.WLc}, equation (\ref{e.2}) implies that the induced foliation $f_2 = \mathcal{F}\cap T_2$ must contain Reeb annuli. If the foliation $f_1 = \mathcal{F}\cap T_1$ similarly contains Reeb annuli, then by the same reason in the proof of Theorem~\ref{t.WLc}, no such foliation $\mathcal{F}$ can exist.

The remaining case occurs when $f_1 = \mathcal{F} \cap T_1$ forms a suspension foliation while $f_2 = \mathcal{F} \cap T_2$ contains Reeb annuli. When the core curves of the Reeb annuli on $T_2$ have slopes different from $\infty$, we can perform $\infty$-slope saddle fillings at $T_2$. This operation produces a solid torus $W(\star,\infty)$ endowed with an induced taut foliation $\overline{\mathcal{F}}$. 

By construction, this foliation $\overline{\mathcal{F}}$ has two crucial properties: it intersects the boundary $\partial W(\star,\infty) = T_1$ transversely, and the induced 1-dimensional foliation $\overline{f}_1 = \overline{\mathcal{F}} \cap T_1$ retains its suspension structure.

To complete the proof of Theorem~\ref{t.WLc2}, we require the following fundamental result from the foliation theory. While likely known to experts, we provide a complete proof due to the absence of explicit references in the literature.

\begin{lemma}\label{l.storus}
Let $V$ be a solid torus. There exists no nontrivial taut foliation on $V$ that is transverse to the boundary, where "nontrivial" means the foliation is not a disk fibered over $S^1$.
\end{lemma}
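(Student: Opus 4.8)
The plan is to show that any such foliation must possess a compact leaf that is a properly embedded meridian disk, and then to invoke Reeb's global stability theorem to conclude that $\mathcal{F}$ is the product disk fibration, contradicting the nontriviality hypothesis. The first reduction is standard: a taut foliation is Reebless, so by Novikov's theorem $\mathcal{F}$ admits no null-homotopic closed transversal and each leaf is $\pi_1$-injective. The whole argument is thus organized around producing one disk leaf and then propagating it.

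The heart of the argument is to produce that disk leaf. Let $D$ be a meridian disk of $V$, so $\partial D=\mu$ is a meridian and is null-homotopic in $V$. Since a curve in $\partial V$ is transverse to $\mathcal{F}$ precisely when it is transverse to the boundary foliation $f=\mathcal{F}\cap\partial V$, and since a transverse meridian would be a null-homotopic closed transversal (forbidden by Novikov), no meridian can be transverse to $f$. I would use this to force the boundary foliation to carry a closed leaf of meridional slope, after which $\partial D$ may be arranged to be a leaf of $f$. Putting $D$ in general position with $\mathcal{F}$ yields an induced singular foliation $\mathcal{F}|_D$ whose interior tangencies are centers and saddles; by the standard Roussarie-type argument the Reebless hypothesis lets me isotope $D$ rel boundary so that no center tangencies and no interior circle leaves remain. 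With the boundary a leaf, the Poincar\'e--Hopf index count reads
\[
\sum_{p}\mathrm{ind}(p)=\chi(D)=1,
\]
the sum running over the interior tangencies. Since saddles contribute non-positively this identity cannot be satisfied by a genuine singular induced foliation, so the induced foliation must degenerate, i.e. $D$ is itself a leaf of $\mathcal{F}$. This is precisely Novikov's vanishing-cycle mechanism: a center tangency on a compressing disk would generate a Reeb component, which tautness excludes.

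With a compact disk leaf $D$ in hand, $\pi_1(D)=1$ forces trivial holonomy, so Reeb's global stability theorem, in the version for foliations transverse to the boundary of a compact manifold, applies: every leaf is a meridian disk and $V$ is the total space of a $D^2$-bundle over $S^1$ whose fibers are the leaves. Hence $\mathcal{F}$ is the disk fibration, contradicting the assumption that $\mathcal{F}$ is nontrivial.

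I expect the main obstacle to be the middle step, namely extracting the disk leaf. Two points require care. First, organizing the boundary foliation $f$ so that $\partial D$ is realized as a leaf of $f$: the cases where $f$ is a suspension of non-meridional slope must be excluded because then a transverse meridian would exist, while the cases where $f$ contains Reeb annuli need the tangency bookkeeping to be carried out directly on $D$ rather than via a boundary leaf. Second, the Reebless removal of center tangencies together with the resulting index contradiction is the analytic core of Novikov's theorem, and it must be quoted carefully in the $C^{\infty,0}$ regularity setting used throughout this paper.
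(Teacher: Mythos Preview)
Your approach is sound in outline but genuinely different from the paper's. The paper argues as follows: take two copies of $(V,\mathcal{F})$ and glue them along $\partial V$ by the identity to obtain a foliation $\widehat{\mathcal{F}}$ on the double $M\cong S^2\times S^1$; since $\mathcal{F}$ is taut and transverse to the boundary, $\widehat{\mathcal{F}}$ is taut on the closed manifold $M$; then one simply invokes the classification of taut foliations on $S^2\times S^1$ (Calegari, \emph{Foliations and the Geometry of 3-Manifolds}, Theorem~4.35), which says any such foliation is the product foliation by spheres, and hence its restriction to each solid torus is the disk fibration. Your route via Novikov, the Roussarie--Thurston index count on a meridian disk, and Reeb stability is more hands-on and more self-contained, but it buys that at the cost of the two issues you correctly flag: Novikov's vanishing-cycle argument is formulated for closed manifolds, so you are implicitly doubling anyway (or must justify the with-boundary version), and the claim that the absence of a transverse meridian forces a closed meridional leaf of $f$ requires a case analysis of the boundary foliation (suspension versus Reeb-annuli, and the slope of the closed leaves). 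Both arguments are correct once completed; the paper's is shorter because the doubling trick outsources all of the Novikov/Reeb-stability content to a single citation, while yours would reprove a chunk of that classification in situ.
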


\begin{proof}[Proof of Lemma~\ref{l.storus}]
Consider a solid torus $V$ equipped with a taut foliation $\mathcal{F}$ transverse to $\partial V = T$. Take two identical copies of $(V,\mathcal{F})$ and glue them along their boundaries via the identity map to construct a closed 3-manifold $M \cong S^2 \times S^1$ with an induced foliation $\widehat{\mathcal{F}}$.

Since $\mathcal{F}$ is taut, the doubled foliation $\widehat{\mathcal{F}}$ remains taut on $M$. However, Calegari's classification \cite[Theorem 4.35]{Cal2} shows that any taut foliation on $S^2 \times S^1$ must be trivial. As $(M,\widehat{\mathcal{F}})$ is the double of $(V,\mathcal{F})$, this forces $\mathcal{F}$ itself to be trivial, consisting entirely of disk leaves.
\end{proof}

Applying Lemma~\ref{l.storus} to our construction, we conclude that the foliation $\overline{\mathcal{F}}$ on the solid torus $W(\star,\infty)$ must be trivial. This immediately implies the absence of Reeb annuli in $T_2 \cap \overline{\mathcal{F}}$, leading to a contradiction with our earlier assumption. Consequently, no co-orientable taut foliation exists under these conditions, except for the singular case where the core curve of a Reeb annulus on $T_2$ has slope $\infty$. This completes the proof of Theorem~\ref{t.WLc2}. 
\end{proof}

\section{More examples}\label{s.more}
  Based on our analysis of the Whitehead link example, we can summarize a general method for obstructing the existence of co-orientable taut foliations with certain properties on 3-manifolds with boundary. Let $M$ be an orientable 3-manifold with nonempty boundary consisting of $n$ tori. The obstruction method proceeds as follows:

    \begin{description}
    \item[Step 1 (Index Analysis)] 
    
    For each boundary component $T_i$ ($i = 1,\dots,n$), construct an embedded orientable surface $S_i$ whose boundary $\partial S_i$ is an essential simple closed curve in $T_i$. Applying the index formula, we deduce that any co-orientable taut foliation $\mathcal{F}$ on $M$ transverse to $\partial M$ with vanishing relative Euler class must induce Reeb annuli on every boundary torus.

    \item[Step 2 (Dehn Filling Argument)]
    
    Employ the saddle filling technique on $M$. If sufficiently many Dehn fillings of $M$ yield 3-manifolds that admit no co-orientable taut foliations, we obtain a contradiction. This contradiction implies the non-existence of such a foliation $\mathcal{F}$ on the original manifold $M$.
    \end{description}

We now apply this two-step method to construct additional examples. Building on the work of Dunfield, Hoffman, and Licata \cite{DHL}, we consider a family of manifolds that is suitable for our approach. These 3-manifolds are obtained from the exterior of a specific two-component link $L$ in $S^3$, as illustrated in Figure~\ref{f.10}.
\begin{figure}[!h]
    \centering
    \includegraphics[width=3 in]{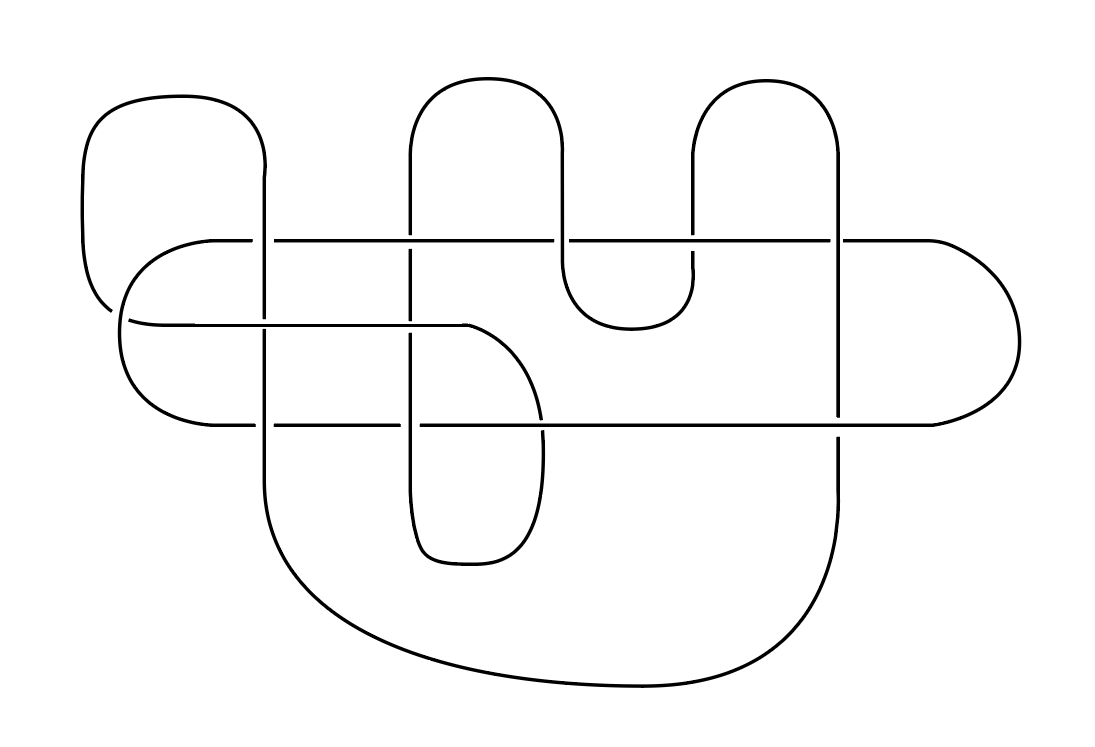}
    \caption{Link $L$ with 2 components}
    \label{f.10}
\end{figure}

Let $Y$ be the exterior of the link $L$. The symmetry between the two components of $L$ allows us to perform $(6k+1,k)$ Dehn filling on either component, yielding the 1-cusped hyperbolic 3-manifold $Y_k = Y(\frac{6k+1}{k},\star)$ (or equivalently $Y(\star,\frac{6k+1}{k})$). According to Theorem 4.4 of \cite{DHL}, each $Y_k$ satisfies three key properties: it admits two lens space Dehn fillings of coprime orders, represents the exterior of a knot in an integer homology sphere, and fibers over the circle with fiber a once-punctured surface.

The surface bundle structure guarantees the existence of a once-punctured surface $S_k \subset Y_k$ with $\partial S_k \subset \partial Y_k$. Following Step 1 of our general method, assume there exists a co-orientable taut foliation $\mathcal{F}$ on $Y_k$ transverse to $\partial Y_k$ with vanishing relative Euler class $e(T\mathcal{F}) = 0$. The same argument applied to the Whitehead link exterior shows that the induced 1-dimensional foliation $\mathcal{F} \cap \partial Y_k$ must contain Reeb annuli.

Proceeding to Step 2, we apply the saddle filling construction to $Y_k$. Lemma 4.6 of \cite{DHL} establishes that both $Y_k(\infty)$ and $Y_k(4)$ are lens spaces of coprime orders, neither of which can support a co-orientable taut foliation. Since there exists only one exceptional slope for $\partial Y_k$ when $k$ is fixed, this leads to a contradiction. We conclude that $Y_k$ cannot admit any co-orientable taut foliation with the specified properties.

\vskip 1cm
\noindent Yao Fan

\noindent {\small School of Mathematical Sciences}

\noindent {\small Key Laboratory of Intelligent Computing and
Applications (Tongji University), Ministry of Education}

\noindent{\small Tongji University, Shanghai 200092, CHINA}

\noindent{\footnotesize{E-mail: fanyao@tongji.edu.cn }}\\

\noindent Zhentao Lai

\noindent {\small School of Mathematical Sciences}

\noindent{\small Tongji University, Shanghai 200092, CHINA}

\noindent{\footnotesize{E-mail: zhentao\underline{ }lai@126.com }}\\

\noindent Bin Yu

\noindent {\small School of Mathematical Sciences}

\noindent {\small Key Laboratory of Intelligent Computing and
Applications (Tongji University), Ministry of Education}

\noindent{\small Tongji University, Shanghai 200092, CHINA}

\noindent{\footnotesize{E-mail: binyu1980@gmail.com }}

\vskip 2mm

\newpage

\appendix

\end{document}